\theoremstyle{plain}
\newtheorem{thm}{\protect\theoremname}
\theoremstyle{definition}
\newtheorem{defn}[thm]{\protect\definitionname}
\theoremstyle{remark}
\newtheorem{rem}[thm]{\protect\remarkname}
\theoremstyle{plain}
\newtheorem{prop}[thm]{\protect\propositionname}
\theoremstyle{plain}
\newtheorem{cor}[thm]{\protect\corollaryname}
\theoremstyle{plain}
\newtheorem{lem}[thm]{\protect\lemmaname}
\newcommand{\lyxaddress}[1]{
	\par {\raggedright #1
	\vspace{1.4em}
	\noindent\par}
}
\providecommand{\corollaryname}{Corollary}
\providecommand{\definitionname}{Definition}
\providecommand{\lemmaname}{Lemma}
\providecommand{\propositionname}{Proposition}
\providecommand{\remarkname}{Remark}
\providecommand{\theoremname}{Theorem}
\begin{document}
\global\long\def\R{\mathbb{R}}%
\global\long\def\vu{\sqrt{1+|\nabla u|^{2}}}%
\global\long\def\divergence{\mathop{\mathrm{div}}}%
\global\long\def\graph{\mathop{\mathrm{graph}}}%
\global\long\def\L{\mathop{\mathcal{L}}}%
\global\long\def\ddt{\frac{\mathrm{d}}{\mathrm{d}t}}%

\title{Mean curvature flow of symmetric double graphs only develops singularities
on the hyperplane of symmetry}
\author{Wolfgang Maurer\thanks{funded by the Deutsche Forschungsgemeinschaft (DFG, German Research Foundation) Project number 336454636}}
\maketitle
\begin{abstract}
By a symmetric double graph we mean a hypersurface which is mirror-symmetric
and the two symmetric parts are graphs over the hyperplane of symmetry.
We prove that there is a weak solution of mean curvature flow that
preserves these properties and singularities only occur on the hyperplane
of symmetry. The result can be used to construct smooth solutions
to the free Neumann boundary problem on a supporting hyperplane with
singular boundary.

For the construction we introduce and investigate a notion named ``vanity''
and which is similar to convexity. Moreover, we rely on Sáez' and
Schnürer's ``mean curvature flow without singularities'' to approximate
weak solutions with smooth graphical solutions in one dimension higher.
\end{abstract}

\section{Introduction}

Symmetry is one of the most important concepts in mathematics. As
such it appears often and is worth studying. Our result demonstrates
that under certain circumstances one can delimit the region where
the occurence of singularities in mean curvature flow is possible
in a way naturally related to an existing symmetry. It is well-known
that singularities occur in the mean curvature flow of closed hypersurfaces
in Euclidean space. This can be easily seen by placing the closed
hypersurface inside a sphere, which shrinks to a point under the mean
curvature flow, and using the fact that by the comparison principle
the evolving hypersurface must stay inside the sphere. For graphical
hypersurfaces, however, the situation is different and no singularities
occur (cf.\ \cite{EH}). In this note, we investigate a situation
where these two regimes get in touch. Consider a mirror-symmetric
closed hypersurface, i.e., a hypersurface that is symmetric with respect
to some hyperplane. Furthermore, assume that the two symmetric parts
are graphical over that hyperplane. In the title we called such a
surface a symmetric double graph. Since the hypersurface is closed,
singularities inevitably arise for the mean curvature flow. But due
to the graphical properties, these occur only on the hyperplane of
symmetry. Or at least, we show that there exists a weak solution,
the singularity resolving solution of \cite{SS}, with this behavior.

One motivation for this problem arose from the need to approximate
a flow of non-compact graphs by flows of closed hypersurfaces. In
this context symmetric double graphs appear as a reasonable means
of approximation. In these situations it is highly desirable to exclude
singularities on the graphical parts, that is to say that singularities
only develop on the hyperplane of symmetry. Usually, this is done
by a convexity assumption because convex hypersurfaces don't develop
singularities until they vanish in a point. For the mean curvature
flow in particular, this is a well-known result of \cite{Huisken}.
Another interesting aspect is that the graphical parts may be viewed
as solutions of the free Neumann boundary value problem to mean curvature
flow where the hypersurfaces meet a supporting hyperplane perpendicularly.
Each symmetric part is a graphical solution to this problem that is
smooth up to the (free) boundary which may be singular.

To deal with the symmetric double graphs we introduce a notion which
is similar to convexity and which we call vanity. In a convex set
any two points can see each other; in a vain set each point can see
its mirror-image, hence the name. We will give a few results on vain
sets and functions which are reminiscent of corresponding results
for convex sets and functions.

We also heavily rely on the results of \cite{SS}. In that paper Sáez
and Schnürer established the existence for mean curvature flow of
complete graphs and interpreted the projections, the domains of definition
of the functions representing the graphs, as a weak solution for mean
curvature flow. They dubbed it a singularity resolving solution. The
complete graphs do not show any singularities, but the singularity
resolving solution can. So one has a weak solution at hand which flows
through singularities and which is backed by a smooth solution in
one dimension higher. We will exploit this and work with the smooth
graphical solutions to investigate the weak solution that appears
in the projections.

This note is organized as follows. In the next section we will introduce
the notion of vanity and prove a few related results. Next, in Section
\ref{sec:Graphical-mean-curvature} we investigate the graphical mean
curvature flow for vain functions. The results lay the foundation
for Section \ref{sec:Singularity-resolving-solutions}, where we show
that for a singularity resolving solution of symmetric double graphs,
the two graphical parts stay smooth. In particular, singularities
only appear on the hyperplane of symmetry.

\section{Vanity}
\begin{defn}
\global\long\def\ox{\overline{x}}%
 For $(x^{1},\ldots,x^{n})\in\R^{n}$ we denote by $\ox$ the reflection
of $x$ in the first direction. So $\ox$ is given by 
\[
\ox\coloneqq(-x^{1},x^{2},\ldots,x^{n})\;.
\]

Moreover, we will write 
\[
x_{\lambda}\coloneqq(\lambda x^{1},x^{2},\ldots,x^{n})=\frac{x+\ox}{2}+\lambda\,\frac{x-\ox}{2}\text{ for }\lambda\in[-1,1]\,.
\]
\end{defn}

\begin{rem}
We have $x_{1}=x$ and $x_{-1}=\ox$.
\end{rem}

\begin{defn}
\, 
\begin{enumerate}
\item A subset $\Omega\subset\R^{n}$ is called \emph{vain} if all of its
points can see their mirror image, i.e., 
\[
\forall x\in\Omega\,\forall\lambda\in[-1,1]\colon x_{\lambda}\in\Omega\qquad\text{holds.}
\]
\item If $\Omega\subset\R^{n}$ is vain, then a function $u\colon\Omega\to\overline{\R}=[-\infty,+\infty]$
is called \emph{vain} if 
\[
\forall x\in\Omega\,\forall\lambda\in[-1,1]\colon u(x_{\lambda})\le u(x)\qquad\text{holds.}
\]
\end{enumerate}
\end{defn}

\begin{rem}
\label{rem easy consequences} \, 
\begin{enumerate}
\item Every vain set is mirror-symmetric. 
\item Every vain function $u\colon\Omega\to\overline{\R}$ is mirror-symmetric,
i.e., $u(x)=u(\ox)$ holds for all $x\in\Omega$. \label{it ec sym} 
\item If $\Omega$ is vain, then the fibers of the projection $\Omega\ni x\mapsto(x^{2},\ldots,x^{n})$
are convex, i.e., they are lines.\label{it ec fibers} 
\item A function $u\colon\Omega\to\overline{\R}$ is vain if and only if
the set 
\[
\big\{(x^{1},\ldots,x^{n+1})\in\Omega\times\overline{\R}\colon u(x^{1},\ldots,x^{n})\le x^{n+1}\big\}
\]
is vain. 
\end{enumerate}
\end{rem}

\begin{prop}
\label{prop vain level-sets} If $u\colon\Omega\to\overline{\R}$
is vain, then the sets $u^{-1}([-\infty,a])$ and $u^{-1}([-\infty,a))$
are vain for any $a\in\overline{\R}$.
\end{prop}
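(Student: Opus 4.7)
The plan is to unfold the definitions directly. Fix $a \in \overline{\R}$ and set $S_{\le} \coloneqq u^{-1}([-\infty,a])$ and $S_{<} \coloneqq u^{-1}([-\infty,a))$. To show vanity of $S_{\le}$, I pick an arbitrary $x \in S_{\le}$ and $\lambda \in [-1,1]$, and must verify $x_\lambda \in S_{\le}$. Since by hypothesis $u$ is vain, its domain $\Omega$ is vain (this is part of the definition), so $x_\lambda \in \Omega$ and $u(x_\lambda)$ is defined. Then vanity of $u$ gives $u(x_\lambda) \le u(x) \le a$, so $x_\lambda \in S_{\le}$.

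The argument for $S_{<}$ is identical: the same chain of inequalities yields $u(x_\lambda) \le u(x) < a$, which even gives the strict inequality without needing any new input. Note that the inequality $u(x_\lambda) \le u(x)$ (rather than a reversed one) is exactly what makes sublevel sets, as opposed to superlevel sets, vain; this is completely analogous to the fact that for a convex function the sublevel sets are the convex ones.

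There is really no obstacle here: the proof is a one-line verification of the definition, and the only subtlety worth flagging is that the definition of a vain function already presupposes vanity of its domain, so membership $x_\lambda \in \Omega$ is automatic and need not be argued separately. In particular, the proposition could equally well be stated as: the preimage under a vain function of any downward-closed subset of $\overline{\R}$ is vain.
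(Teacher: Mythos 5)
Your proof is correct and is exactly the direct unfolding of the definitions that the paper has in mind (its proof simply reads ``Obvious from the definitions''). Nothing to add.
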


\begin{proof}
Obvious from the definitions. 
\end{proof}
\begin{prop}
\label{prop vain monotonical} If $u$ is a vain function and $m$
is a monotonically increasing function, then $m\circ u$ is vain. 

Furthermore, if $m\colon\overline{\R}^{k}\to\overline{\R}$ is a monotonically
increasing function with respect to the partial ordering given by
$\left(z_{1},\ldots,z_{k}\right)\le\left(z_{1}',\ldots z_{k}'\right)\,\iff\,\forall i\in\{1,\ldots,k\}:z_{i}\le z_{i}'$
and if $u_{1},\ldots,u_{k}$ are vain functions on a vain set $\Omega$,
then $\Omega\ni x\mapsto m\left(u_{1}(x),\ldots,u_{k}(x)\right)$
is vain, too.
\end{prop}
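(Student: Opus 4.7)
The plan is to unwind the definition of vanity directly. Observe first that the single-variable statement is the special case $k=1$, $u_{1}=u$ of the multivariate statement (with $m$ as a monotonically increasing function $\overline{\R}\to\overline{\R}$), so it suffices to prove the second claim.

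I would fix an arbitrary $x\in\Omega$ and $\lambda\in[-1,1]$. Applying vanity of each $u_{i}$ gives the $k$ inequalities $u_{i}(x_{\lambda})\le u_{i}(x)$ for $i=1,\dots,k$. Reading these jointly as a single componentwise inequality
\[
\bigl(u_{1}(x_{\lambda}),\dots,u_{k}(x_{\lambda})\bigr)\le\bigl(u_{1}(x),\dots,u_{k}(x)\bigr)
\]
in the product partial order on $\overline{\R}^{k}$, and then applying the monotonicity hypothesis on $m$, yields
\[
m\bigl(u_{1}(x_{\lambda}),\dots,u_{k}(x_{\lambda})\bigr)\le m\bigl(u_{1}(x),\dots,u_{k}(x)\bigr).
\]
Since $x$ and $\lambda$ were arbitrary, this is exactly the defining inequality of vanity for $x\mapsto m(u_{1}(x),\dots,u_{k}(x))$.

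There is really no obstacle to overcome: the argument never invokes any geometric feature of the reflection $x\mapsto\overline{x}$ or of the interpolation $x_{\lambda}$ beyond what has already been packaged into the vanity hypothesis on each $u_{i}$. The only point worth a moment's attention is that the inequalities must be interpreted in $\overline{\R}=[-\infty,+\infty]$; but this is the standard ordering, the monotonicity hypothesis on $m$ is formulated with respect to it, and vanity was defined allowing values in $\overline{\R}$ from the outset, so the chain of inequalities goes through without modification.
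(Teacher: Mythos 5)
Your proof is correct and follows exactly the same route as the paper's: apply vanity of each $u_i$ to get the componentwise inequality $u_i(x_\lambda)\le u_i(x)$, then apply monotonicity of $m$. The paper states this in a single sentence; you simply spell out the quantifiers and note explicitly that the univariate claim is the $k=1$ case.
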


\begin{proof}
The inequalities $u_{i}(x_{\lambda})\le u_{i}(x)$ immediately imply
$m\left(u_{1}(x_{\lambda}),\ldots,u_{k}(x_{\lambda})\right)\le m\left(u_{1}(x),\ldots,u_{k}(x)\right)$.
\end{proof}
\begin{cor}
\label{cor vain sum} For two vain functions $u,v\colon\Omega\to\R$,
their sum $u+v$ is vain again.
\end{cor}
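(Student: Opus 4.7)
The plan is simply to invoke Proposition \ref{prop vain monotonical} with the two-variable addition map as the monotone function. Define $m\colon\R^{2}\to\R$ by $m(z_{1},z_{2})\coloneqq z_{1}+z_{2}$. I would first verify that $m$ is monotonically increasing with respect to the product partial order on $\R^{2}$: if $(z_{1},z_{2})\le(z_{1}',z_{2}')$, meaning $z_{1}\le z_{1}'$ and $z_{2}\le z_{2}'$, then adding the two inequalities yields $z_{1}+z_{2}\le z_{1}'+z_{2}'$, i.e.\ $m(z_{1},z_{2})\le m(z_{1}',z_{2}')$. (Note that restricting the codomain to $\R$ sidesteps any ambiguity like $+\infty+(-\infty)$, so the sum $u+v$ is unambiguously defined.)

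Having checked this, I would apply the second part of Proposition \ref{prop vain monotonical} with $k=2$, $u_{1}=u$, $u_{2}=v$, and this $m$. The proposition then immediately gives that $x\mapsto m(u(x),v(x))=u(x)+v(x)$ is vain on $\Omega$, which is the claim. There is no real obstacle here — the only thing to mention is the elementary monotonicity of addition.
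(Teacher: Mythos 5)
Your proof is correct and takes exactly the route the paper intends: the corollary is stated immediately after Proposition~\ref{prop vain monotonical} precisely so that it follows by taking $m(z_1,z_2)=z_1+z_2$, $k=2$, $u_1=u$, $u_2=v$. Your verification of the monotonicity of addition and the remark about the real-valued codomain avoiding $+\infty+(-\infty)$ are both appropriate.
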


\begin{prop}
Let $\Omega$ be a vain set. A function $u\colon\Omega\to\overline{\R}$
is vain if and only if $u$ is mirror-symmetric and for any $(x^{2},\ldots,x^{n})\in\R^{n-1}$,
$u(\cdot,x^{2},\ldots,x^{n})$ is monotonically increasing on $\{x^{1}\colon(x^{1},x^{2},\ldots,x^{n-1})\in\Omega,\,x^{1}\ge0\}$. 
\end{prop}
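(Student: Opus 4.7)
I would prove the two implications separately. The key combinatorial identities are $\overline{x_{\lambda}}=x_{-\lambda}$ and, if $x^{1}<0$, $\ox_{-\lambda}=x_{\lambda}$; combined with the mirror-symmetry of $u$ these let us freely swap the signs of $\lambda$ and of $x^{1}$ at the level of $u$-values. Throughout the argument, the vain-ness of $\Omega$ guarantees that every auxiliary point I invoke really lies in $\Omega$, so existence of $u$-values is never an issue.

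\emph{Forward direction.} Assume $u$ is vain. Mirror-symmetry of $u$ is already recorded in Remark~\ref{rem easy consequences}(\ref{it ec sym}). To check the monotonicity, fix $(x^{2},\ldots,x^{n})$ and take points $p_{a}\coloneqq(a,x^{2},\ldots,x^{n})$ and $p_{b}\coloneqq(b,x^{2},\ldots,x^{n})$ of $\Omega$ with $0\le a\le b$. If $b=0$ there is nothing to show; otherwise set $\lambda\coloneqq a/b\in[0,1]$, so that $(p_{b})_{\lambda}=p_{a}$, and the defining inequality of vanity $u\bigl((p_{b})_{\lambda}\bigr)\le u(p_{b})$ reads exactly $u(p_{a})\le u(p_{b})$.

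\emph{Backward direction.} Assume now that $u$ is mirror-symmetric and the stated monotonicity holds. Fix $x\in\Omega$ and $\lambda\in[-1,1]$. If $x^{1}<0$, I replace $x$ by $\ox\in\Omega$ (which has positive first coordinate) and $\lambda$ by $-\lambda$: since $u(x)=u(\ox)$ and $\ox_{-\lambda}=x_{\lambda}$ as points in $\R^{n}$, the target inequality $u(x_{\lambda})\le u(x)$ is equivalent to $u(\ox_{-\lambda})\le u(\ox)$. We may therefore assume $x^{1}\ge 0$. Applying mirror-symmetry of $u$ to the point $x_{\lambda}$ yields $u(x_{\lambda})=u(\overline{x_{\lambda}})=u(x_{-\lambda})$, so $u(x_{\lambda})=u(x_{|\lambda|})$ regardless of the sign of $\lambda$. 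Now $x_{|\lambda|}$ and $x=x_{1}$ both lie in $\Omega$ (by vanity of $\Omega$) and both have non-negative first coordinate with $|\lambda|x^{1}\le x^{1}$, so the monotonicity hypothesis gives $u(x_{|\lambda|})\le u(x)$. Combining the two yields $u(x_{\lambda})\le u(x)$, as required.

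I do not expect any real obstacle here: the whole argument is essentially bookkeeping with signs and with the reflection, and the mildly delicate point is only to notice that mirror-symmetry of $u$ reduces the one-variable claim from ``monotone on the whole fibre'' (which would fail at $0$ unless we also know symmetry) to ``monotone on the non-negative half'', which is precisely what the statement asks for.
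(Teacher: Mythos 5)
Your proof is correct and follows essentially the same route as the paper: in the forward direction you write $p_{a}=(p_{b})_{\lambda}$ with $\lambda=a/b$ and invoke vanity; in the backward direction you use mirror-symmetry to reduce to $x^{1}\ge0$, $\lambda\ge0$ and then apply monotonicity. The only difference is presentational — you spell out the sign-bookkeeping identities $\overline{x_{\lambda}}=x_{-\lambda}$ and $\ox_{-\lambda}=x_{\lambda}$ and handle the $b=0$ case explicitly, whereas the paper compresses these reductions into a single sentence.
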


\begin{proof}
Firstly, let $u$ be vain. Then $u$ is mirror-symmetrical (Remark
\ref{rem easy consequences} (\ref{it ec sym})). Let $(x^{2},\ldots,x^{n})\in\R^{n-1}$.
Let $0\le a\le b$ be such that $x^{(b)}\coloneqq(b,x^{2},\ldots,x^{n})\in\Omega$,
and consequently $x^{(a)}\coloneqq(a,x^{2},\ldots,x^{n})\in\Omega$.
With $\lambda\coloneqq\frac{a}{b}\in[0,1]$ we can write $x^{(a)}=x_{\lambda}^{(b)}$.
Hence, by the vanity of $u$, $u(x^{(a)})\le u(x^{(b)})$ holds, which
proves the monotonicity of $u(\cdot,x^{2},\ldots,x^{n})$ on the set
$\{x^{1}\colon(x^{1},x^{2},\ldots,x^{n})\in\Omega,\,x^{1}\ge0\}$.

Now we assume the symmetry and the monotonicity property and prove
that $u$ is vain. Let $x\in\Omega$ and $\lambda\in[-1,1]$. We shall
prove $u(x_{\lambda})\le u(x)$. Symmetry is the reason why we only
need to consider $x^{1}\ge0$ and $\lambda\ge0$. Then $u(x_{\lambda})\le u(x)$
follows from the monotonicity property.
\end{proof}
\begin{cor}
\label{cor partial_1 u ge 0} Let $u\in C^{1}(\Omega)$ be a mirror-symmetrical
function on a vain set $\Omega$. Then $u$ is vain if and only if
$\partial_{1}u(x)\ge0$ for $x^{1}>0$ holds.
\end{cor}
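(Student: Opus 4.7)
The plan is to reduce the corollary directly to the preceding proposition, which characterises vanity (for mirror-symmetric $u$) as monotonicity of $u(\cdot,x^{2},\ldots,x^{n})$ on the nonnegative half of each fiber. By Remark~\ref{rem easy consequences}(\ref{it ec fibers}), each such fiber is an interval, so on it $u$ restricts to a $C^{1}$ function of a single real variable, and the standard equivalence between nonnegativity of the derivative and monotonicity does the rest.

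In more detail, I would fix $(x^{2},\ldots,x^{n})\in\R^{n-1}$ and set $I\coloneqq\{t\in\R : (t,x^{2},\ldots,x^{n})\in\Omega\}$, which by vanity of $\Omega$ is an interval symmetric about $0$. Writing $f(t)\coloneqq u(t,x^{2},\ldots,x^{n})$, one has $f\in C^{1}(I)$ with $f'(t)=\partial_{1}u(t,x^{2},\ldots,x^{n})$. If $u$ is vain, the preceding proposition says that $f$ is monotonically increasing on $I\cap[0,\infty)$, so $f'(t)\ge 0$ there, yielding $\partial_{1}u(x)\ge 0$ whenever $x^{1}>0$. Conversely, if $\partial_{1}u\ge 0$ for $x^{1}>0$, then $f'\ge 0$ on $I\cap(0,\infty)$, and the fundamental theorem of calculus gives $f(b)-f(a)=\int_{a}^{b}f'(t)\,dt\ge 0$ for all $0\le a\le b$ in $I$. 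Together with the assumed mirror symmetry of $u$, the preceding proposition then yields vanity.

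There is no serious obstacle; the argument is merely the fiberwise translation of a one-dimensional calculus fact. The only minor subtlety is that the hypothesis on the derivative is phrased for $x^{1}>0$ rather than $x^{1}\ge 0$, but this is harmless because the integral $\int_{0}^{b}f'(t)\,dt$ only involves $f'$ on $(0,b)$, and continuity of $\partial_{1}u$ supplies the boundary value automatically in any case.
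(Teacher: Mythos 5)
Your proposal is correct and takes essentially the same approach the paper intends: the paper leaves the corollary unproved, clearly regarding it as the fiberwise calculus translation (monotonicity $\Leftrightarrow$ nonnegative derivative) of the preceding proposition, which is exactly what you spell out. The one subtlety you flag — the hypothesis $\partial_{1}u\ge0$ only for $x^{1}>0$ — is handled correctly by integrating over $(0,b)$.
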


The following explains the relation of vanity to symmetric double
graphs.
\begin{cor}
\label{prop vain implies graph} Let $\Omega$ be an open, vain set.
Then it is of the form 
\begin{equation}
\Omega=\big\{(x^{1},\hat{x})\in\R^{n}\colon\hat{x}\in U,\,|x^{1}|<h(\hat{x})\big\}\label{eq prop vain implies graph}
\end{equation}
for a function $h$ defined on an open set $U\subset\R^{n-1}$. 
\end{cor}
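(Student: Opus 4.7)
The plan is to read off the representation directly from the two structural facts collected in Remark \ref{rem easy consequences}: a vain set is mirror-symmetric, and its fibers over the projection $x\mapsto(x^2,\ldots,x^n)$ are convex, hence intervals. Once both facts are in hand, the fibers must be symmetric intervals about $0$, and the promised function $h$ is just the half-length.

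Concretely, I would first define $U\subset\R^{n-1}$ to be the image of $\Omega$ under the projection $\pi\colon(x^1,\hat x)\mapsto\hat x$. Since $\Omega$ is open and $\pi$ is an open map, $U$ is open. For each $\hat x\in U$ consider the fiber
\[
F(\hat x)\coloneqq\{x^1\in\R\colon(x^1,\hat x)\in\Omega\}.
\]
Because $\Omega$ is open, $F(\hat x)$ is an open subset of $\R$; by Remark \ref{rem easy consequences} (\ref{it ec fibers}) it is convex, so it is an open interval. By Remark \ref{rem easy consequences} (i) the set $\Omega$ is mirror-symmetric, which exactly says that $F(\hat x)$ is symmetric about $0$. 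An open symmetric interval is of the form $(-h(\hat x),h(\hat x))$ with $h(\hat x)\in(0,\infty]$; equivalently one may set $h(\hat x)\coloneqq\sup F(\hat x)$.

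With this definition of $U$ and $h$ the characterization (\ref{eq prop vain implies graph}) holds by construction: $(x^1,\hat x)\in\Omega$ iff $\hat x\in U$ and $x^1\in F(\hat x)=(-h(\hat x),h(\hat x))$. There is no real obstacle here; the proof is essentially a bookkeeping exercise, and the only subtlety worth noting is that one should explicitly use the vanity to ensure the fiber is not merely convex but symmetric (which, in this setup, follows immediately since $x\in\Omega$ forces $x_{-1}=\overline{x}\in\Omega$).
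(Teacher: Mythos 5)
Your proof is correct and follows the same route as the paper: project to $\R^{n-1}$ to get $U$, observe each fiber is an open, convex (by Remark~\ref{rem easy consequences}~(\ref{it ec fibers})), symmetric interval, and define $h$ as its half-length. You simply spell out the steps that the paper leaves implicit.
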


\begin{proof}
Let $U$ be the projection of $\Omega$ to $\R^{n-1}$, where we use
the projection $p$ that drops the first coordinate. Because $\Omega$
is open and vain, the fibers of that projection $p$ are lines of
the form 
\[
p^{-1}(\hat{x})=\{\hat{x}\}\times(-h(\hat{x}),h(\hat{x}))\;.
\]
Defining $h$ on $U$ in this way shows that $\Omega$ has the asserted
form.
\end{proof}
\begin{rem}
For any continuous function $h$ on an open subset $U\subset\R^{n-1}$,
we can define an open, vain set via (\ref{eq prop vain implies graph}).
\end{rem}

\begin{lem}
\label{lem vain dist} For a vain set $\Omega\subset\R^{n}$ the negative
distance function $-d\coloneqq-\mathrm{dist}_{\partial\Omega}$, defined
on $\Omega$, is vain.
\end{lem}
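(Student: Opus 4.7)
The plan is to show directly that $d(x_{\lambda}) \ge d(x)$ for every $x \in \Omega$ and $\lambda \in [-1,1]$, which is the defining inequality for the vanity of $-d$. Since $\Omega$, and hence $d$, is mirror-symmetric, I may assume $x^{1} \ge 0$ without loss of generality.

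The key step is to establish the ball inclusion $B_{d(x)}(x_{\lambda}) \subset \Omega$, which then immediately yields $d(x_{\lambda}) \ge d(x)$. Given an arbitrary $z = (z^{1}, \hat z)$ in that ball, I introduce the two horizontal translates
\[
z_{+} := z + (x - x_{\lambda}) = \big(z^{1} + (1-\lambda) x^{1},\, \hat z\big), \qquad z_{-} := z + (\overline{x} - x_{\lambda}) = \big(z^{1} - (1+\lambda) x^{1},\, \hat z\big).
\]
A direct computation gives $|z_{+} - x| = |z_{-} - \overline{x}| = |z - x_{\lambda}| < d(x)$; combined with $d(\overline{x}) = d(x)$ (since $\partial\Omega$ is mirror-symmetric and the reflection is an isometry), this places both $z_{+}$ and $z_{-}$ in $\Omega$.

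It remains to conclude $z \in \Omega$. The points $z_{+}$, $z$, and $z_{-}$ share the coordinates $x^{2},\ldots,x^{n}$, so they lie in the same fiber of the projection $(x^{1},\ldots,x^{n}) \mapsto (x^{2},\ldots,x^{n})$. The assumptions $\lambda \in [-1,1]$ and $x^{1} \ge 0$ imply that the first coordinate of $z$ lies between those of $z_{-}$ and $z_{+}$, so $z$ sits on the segment from $z_{-}$ to $z_{+}$. By Remark~\ref{rem easy consequences}~(\ref{it ec fibers}) that fiber (intersected with $\Omega$) is a line, hence convex, and therefore contains $z$.

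The entire argument rests on the clever choice of the two horizontal translates together with the already-recorded fiber-convexity of vain open sets, so I do not anticipate any essential obstacle; the only thing to be careful about is the bookkeeping of signs that makes $z^{1}$ land between the first coordinates of $z_{-}$ and $z_{+}$.
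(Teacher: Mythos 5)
Your proof is correct and is essentially the paper's argument in a different parametrization: with $v := z - x_\lambda$ your $z_+$ and $z_-$ are exactly the paper's $x+v$ and $\overline{x}+v$, both argued to lie in $\Omega$ because $|v|<d(x)=d(\overline{x})$, after which the fiber-convexity of Remark~\ref{rem easy consequences}~(\ref{it ec fibers}) places $z = x_\lambda+v$ in $\Omega$. (Minor slip: the shared coordinates are $\hat z = (z^2,\ldots,z^n)$, not $(x^2,\ldots,x^n)$; and the WLOG $x^1\ge 0$ is harmless but unnecessary since the betweenness of $\lambda x^1$ between $\pm x^1$ holds for either sign.)
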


\begin{proof}
Let $x\in\Omega$ and $\lambda\in[-1,1]$ be given. Because $\Omega$
is mirror-symmetrical, we have $d(x)=d(\ox)$. For $v\in\R^{n}$ with
$|v|<d(x)=d(\ox)$, there hold $x+v\in\Omega$ and $\ox+v\in\Omega$.
Noting Remark \ref{rem easy consequences} (\ref{it ec fibers}),
we deduce $x_{\lambda}+v\in\Omega$. Because $v$ was an arbitrary
vector subject to the condition $|v|<d(x)$, it follows $d(x_{\lambda})\ge d(x)$.
\end{proof}

\paragraph{Mollification of Vain Functions.}

We check whether the standard mollification by convolution with a
mollifying kernel preserves vanity. For special kernels we can give
an affirmative answer. However, this is not expected for general kernels.
For instance, have a look at the vain function $\sqrt{|x|}$ and the
``kernel'' $\frac{1}{2}(\delta_{\varepsilon}+\delta_{-\varepsilon})$.
Then the convolution is not vain.

Let $\eta\in C_{c}^{\infty}(\R^{n})$ be a Friedrichs mollifier such
that $-\eta$ is vain. By Corollary \ref{cor partial_1 u ge 0} this
is equivalent to the conditions $\eta(\overline{x})=\eta(x)$ for
all $x\in\R^{n}$ and $\partial_{1}\eta(x)\ge0$ for $x^{1}<0$. For
$\varepsilon>0$ we define $\eta_{\varepsilon}(x)\coloneqq\varepsilon^{-n}\,\eta(x/\varepsilon)$.
Clearly, this function has these same properties as $\eta$ does.

Let $u\colon\R^{n}\to\R$ be a vain function. The mollification of
$u$ is defined by $u_{\varepsilon}(x_{0})\coloneqq\int u(x)\,\eta_{\varepsilon}(x_{0}-x)\,\mathrm{d}x$.
We check $u_{\varepsilon}(\overline{x}_{0})=u_{\varepsilon}(x_{0})$
and $\partial_{1}u_{\varepsilon}(x_{0})\ge0$ for $x_{0}^{1}>0$:
(We use transformation variables $y=\overline{x}$ and $z=\overline{x}+2\,x_{0}^{1}\,e_{1}$.)
\begin{equation}
\begin{split}u_{\varepsilon}(\overline{x}_{0}) & =\int u(x)\,\eta_{\varepsilon}(\overline{x}_{0}-x)\,\mathrm{d}x=\int u(\overline{y})\,\eta_{\varepsilon}(\overline{x}_{0}-\overline{y})\,\mathrm{d}y\\
 & =\int u(y)\,\eta_{\varepsilon}(x_{0}-y)\,\mathrm{d}y=u_{\varepsilon}(x_{0})\;,
\end{split}
\label{eq vain molli sym}
\end{equation}
\begin{equation}
\begin{split}\partial_{1}u_{\varepsilon}(x_{0}) & =\int u(x)\,\partial_{1}\eta_{\varepsilon}(x_{0}-x)\,\mathrm{d}x\\
 & =\int_{\{x^{1}<x_{0}^{1}\}}u(x)\,\partial_{1}\eta_{\varepsilon}(x_{0}-x)\,\mathrm{d}x+\int_{\{x^{1}>x_{0}^{1}\}}u(x)\,\partial_{1}\eta_{\varepsilon}(x_{0}-x)\,\mathrm{d}x\\
 & =\int_{\{z^{1}>x_{0}^{1}\}}u(\overline{z}+2\,x_{0}^{1}\,e_{1})\,\partial_{1}\eta_{\varepsilon}(\underbrace{x_{0}-\overline{z}-2\,x_{0}^{1}\,e_{1}}_{\overline{x}_{0}-\overline{z}})\,\mathrm{d}z\\
 & \quad+\int_{\{x^{1}>x_{0}^{1}\}}u(x)\,\partial_{1}\eta_{\varepsilon}(x_{0}-x)\,\mathrm{d}x\\
 & =\int_{\{x^{1}>x_{0}^{1}\}}\big(u(x)-u(\overline{x}+2\,x_{0}^{1}\,e_{1})\big)\,\partial_{1}\eta_{\varepsilon}(x_{0}-x)\,\mathrm{d}x\;.
\end{split}
\label{eq vain molli}
\end{equation}
We have used $\partial_{1}\eta_{\varepsilon}(\overline{x_{0}-z})=-\partial_{1}\eta_{\varepsilon}(x_{0}-z)$
and have renamed the integration variable from $z$ to $x$ in the
last step. For $0<x_{0}^{1}<x^{1}$ we have $-x^{1}<\overline{x}^{1}+2\,x_{0}^{1}<\overline{x}^{1}+2\,x^{1}=x^{1}$
and $\partial_{1}\eta_{\varepsilon}(x_{0}-x)\ge0$. We deduce from
the vanity of $u$ that $u(x)\ge u(\overline{x}+2\,x_{0}^{1}\,e_{1})$
and therefore obtain from (\ref{eq vain molli}) 
\[
\partial_{1}u_{\varepsilon}(x_{0})\ge0\qquad\text{for \ensuremath{x_{0}^{1}>0}\,.}
\]
Together with (\ref{eq vain molli sym}), the symmetry of $u_{\varepsilon}$,
the vanity of the mollification $u_{\varepsilon}$ now follows from
Corollary \ref{cor partial_1 u ge 0}.

\section{Graphical mean curvature flow\label{sec:Graphical-mean-curvature}}

For a function $(x,t)\mapsto u(x,t)$ of space and time, its graphs
$\left(u(\cdot,t)\right)_{t}$ flow by mean curvature if and only
if $u$ solves the graphical mean curvature flow
\[
\partial_{t}u=\left(\delta^{ij}-\frac{u^{i}\,u^{j}}{1+|\mathrm{D}u|^{2}}\right)u_{ij}\,.
\]
In this section we examine whether vanity is preserved under the graphical
mean curvature flow. Proposition \ref{prop vain MCF} proves that
for a bounded situation with constant boundary values. In Proposition
\ref{prop vain MCFwS} we demonstrate that for the graphical mean
curvature flow of complete graphs there exists a solution which preserves
vanity.
\begin{prop}
\label{prop vain MCF} Let $\Omega\subset\R^{n}$ be a vain, open,
bounded, and smooth set. Let $u\in C^{2;1}(\Omega\times[0,T))$ be
a solution of graphical mean curvature flow such that $u\le a$ ($a\in\R$)
and $u(x,t)=a$ for $x\in\partial\Omega,\,t\in[0,T)$ hold.

If $u(\cdot,0)$ is vain, then $u(\cdot,t)$ is vain for all $t\in[0,T)$.
\end{prop}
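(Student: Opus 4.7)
The plan is to reduce vanity of $u(\cdot,t)$ to two statements: first, that $u(\cdot,t)$ is mirror-symmetric, and second, that $u(\cdot,\hat x,t)$ is monotonically non-decreasing in $x^1$ on $\{x^1\ge 0\}$ along every fiber. Together, these give vanity by the characterization in the proposition preceding Corollary~\ref{cor partial_1 u ge 0}. The symmetry will follow from uniqueness for the Dirichlet problem, and the monotonicity from an Alexandrov-style moving-planes argument using reflections $R_c$ across the hyperplanes $\{x^1=c\}$ for $c>0$.

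For the symmetry, I set $\hat u(x,t)\coloneqq u(\ox,t)$. Since graphical mean curvature flow is invariant under Euclidean reflections and $\Omega$ is mirror-symmetric, $\hat u$ is a classical solution on $\Omega\times[0,T)$ with the same constant Dirichlet value $a$ on $\partial\Omega$. Because $u(\cdot,0)$ is vain, Remark~\ref{rem easy consequences}~(\ref{it ec sym}) gives $\hat u(\cdot,0)=u(\cdot,0)$. Uniqueness for the Dirichlet problem on the smooth bounded domain $\Omega$ then forces $u\equiv\hat u$, which is the mirror-symmetry of $u(\cdot,t)$.

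For the monotonicity, fix $c>0$ and write $\Omega=\{(x^1,\hat x)\colon \hat x\in U,\,|x^1|<h(\hat x)\}$ via Corollary~\ref{prop vain implies graph}. Let $R_c$ denote Euclidean reflection in $\{x^1=c\}$ and set $\Omega^{>c}\coloneqq\Omega\cap\{x^1>c\}$. A direct check using $0<c\le x^1\le h(\hat x)$ shows $R_c(\overline{\Omega^{>c}})\subset\overline\Omega$, so $\tilde u(x,t)\coloneqq u(R_c(x),t)$ is a classical graphical mean curvature flow solution on $\Omega^{>c}\times[0,T)$. I then compare $\tilde u$ and $u$ on the parabolic boundary: at $t=0$, the point $R_c(x)$ equals $x_\lambda$ with $\lambda=(2c-x^1)/x^1\in(-1,1)$, so vanity of $u(\cdot,0)$ yields $\tilde u(\cdot,0)\le u(\cdot,0)$; on $\partial\Omega\cap\{x^1>c\}$ one has $u\equiv a$ while $R_c$ maps this portion into $\overline\Omega$, hence $\tilde u\le a=u$; and on $\{x^1=c\}\cap\Omega$ the map $R_c$ fixes $x$, so $\tilde u=u$. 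The parabolic comparison principle for graphical mean curvature flow then gives $\tilde u\le u$ throughout $\Omega^{>c}\times[0,T)$. For any $\hat x\in U$ and $0<\xi<\eta<h(\hat x)$, choosing $c=(\xi+\eta)/2$ makes $R_c$ swap $(\eta,\hat x)$ with $(\xi,\hat x)$, so this specializes to $u(\xi,\hat x,t)\le u(\eta,\hat x,t)$, which is the desired monotonicity.

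The main technical obstacle I expect is justifying the parabolic comparison principle on $\Omega^{>c}$, whose boundary has corners where $\partial\Omega$ meets $\{x^1=c\}$. Because $u$ and $\tilde u$ are continuous on the closed parabolic cylinder and $C^{2;1}$ in the interior, however, the standard argument applied to $(\tilde u-u)^+$, via the linear parabolic equation satisfied by the difference of two graphical mean curvature flow solutions, goes through without any extra regularity hypothesis at the corners.
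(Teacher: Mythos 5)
Your proposal is correct, but it follows a genuinely different route from the paper. You reduce vanity to mirror-symmetry plus monotonicity in $x^{1}$ on $\{x^{1}\ge 0\}$ (as in the paper's characterization proposition) and establish the monotonicity by an Alexandrov moving-planes argument: for each $c>0$ you reflect across $\{x^{1}=c\}$, check that $R_{c}$ maps $\overline{\Omega^{>c}}$ into $\overline{\Omega}$, verify $\tilde u\le u$ on the parabolic boundary of $\Omega^{>c}\times[0,T)$ using vanity of $u(\cdot,0)$, the constant Dirichlet data, and $R_{c}$ fixing $\{x^{1}=c\}$, and then invoke the parabolic comparison principle. The paper instead lifts to the evolving hypersurface $M_{t}=\graph u(\cdot,t)$, tracks $w=\nu_{1}=\partial_{1}u/\sqrt{1+|\mathrm D u|^{2}}$, uses the evolution equation $\partial_{t}w-\Delta w=|A|^{2}w$, and applies the parabolic maximum principle on $M_{t}^{+}=M_{t}\cap\{X^{1}>0\}$ to conclude $w\ge0$, which by Corollary~\ref{cor partial_1 u ge 0} gives vanity. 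Both proofs use uniqueness to obtain the mirror-symmetry. Your moving-planes argument is more elementary in that it never needs the linearized evolution equation for the normal component $\nu_{1}$ and works directly at the level of the scalar equation via the standard comparison principle for the difference; the paper's approach is more geometric and in line with the later parts of the paper (where $w$ and its evolution equation are reused for the interior estimates in Theorem~\ref{thm vain N}), so it keeps notation and machinery uniform. One small point you handle correctly but should keep in mind: your comparison is applied on the non-smooth domain $\Omega^{>c}$, but since the parabolic maximum principle for the linearized difference only needs the sign on the full parabolic boundary and continuity up to the closure, the corners where $\partial\Omega$ meets $\{x^{1}=c\}$ are harmless, as you note.
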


\begin{proof}
Because of the uniqueness of the solution, $u(\cdot,t)$ is mirror-symmetrical
for all $t\in[0,T)$.

Let $\nu(\cdot,t)$ be the downwards pointing normal to $M_{t}\coloneqq\graph u(\cdot,t)\subset\R^{n+1}$.
If we track points on $M_{t}$ in time along the normal direction,
then on $M_{t}$ holds
\[
\partial_{t}\nu-\Delta\nu=|A|^{2}\,\nu\;,
\]
where $|A|^{2}$ is the squared norm of the second fundamental form
and $\Delta$ denotes the Laplace-Beltrami operator of $M_{t}$. Accordingly,
for $w\coloneqq\nu_{1}=\left\langle \nu,e_{1}\right\rangle \equiv\frac{\partial_{1}u}{\sqrt{1+|\mathrm{D}u|^{2}}}$
holds 
\begin{equation}
\partial_{t}w-\Delta w=|A|^{2}\,w\;.\label{eq vain eq w}
\end{equation}

Let $M_{t}^{+}\coloneqq\{X\in M_{t}\colon X^{1}>0\}$. By the mirror-symmetry
of $u(\cdot,t)$, we have $w(\cdot,t)=0$ on the part of $\partial M_{t}^{+}$
which lies on $\{X^{1}=0\}$. The vanity of $\Omega$ and $u(\cdot,t)\le a$
as well as $u(x,t)=a$ for $x\in\partial\Omega$ imply that $w(\cdot,t)\ge0$
holds on the remaining part of $\partial M_{t}^{+}$ (cf.\ Proposition
\ref{prop vain implies graph}). Furthermore, the vanity of $u(\cdot,0)$
implies that $w(\cdot,0)\ge0$ on $M_{0}^{+}$ (cf.\ Corollary \ref{cor partial_1 u ge 0}).

The parabolic maximum principle yields $w(p,t)\ge0$ for all $p\in M_{t}^{+}$
and for all $t\in[0,T)$. This in turn implies the vanity of $u(\cdot,t)$
for all $t\in[0,T)$ (Corollary \ref{cor partial_1 u ge 0}).
\end{proof}
\begin{prop}
\label{prop vain MCFwS} Let $\Omega_{0}\subset\R^{n}$ be an open,
vain set and let $u_{0}\colon\Omega_{0}\to\R$ be a vain, locally
Lipschitz function such that for any $a\in\R$ the set $\{x:u_{0}(x)\le a\}$
is compact.

Then there exists a relatively open set $\Omega=\bigcup_{t\ge0}\Omega_{t}\times\{t\}\subset\R^{n}\times[0,\infty)$
compatible with $\Omega_{0}$ from above and such that $\Omega_{t}$
is vain for every $t\ge0$. And there exists a continuous function
$u\colon\Omega\to\R$ which is smooth on $\bigcup_{t>0}\Omega_{t}\times\{t\}$
and which is a solution of the graphical mean curvature flow starting
from the initial condition $u_{0}$ such that $u(\cdot,t)\colon\Omega_{t}\to\R$
is vain for every $t\ge0$. Moreover, for any $a\in\R$ the set $\left\{ (x,t):u(x,t)\le a\right\} $
is compact.
\end{prop}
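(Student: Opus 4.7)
The plan is to realize $u$ as the limit of the Sáez--Schnürer construction \cite{SS} applied to bounded vain approximations of the initial data, so that Proposition \ref{prop vain MCF} delivers vanity at every finite stage and vanity survives the limit.

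\textbf{Approximation of the initial data.} I would exhaust $\Omega_0$ from inside by smooth, bounded, vain open sets $\Omega_0^k \nearrow \Omega_0$. The sublevel set $\{u_0 \le k\}$ is vain by Proposition \ref{prop vain level-sets} and compact by hypothesis; smoothing it via the symmetry-preserving mollification developed at the end of Section~2 applied to the vain distance function supplied by Lemma \ref{lem vain dist} yields a smooth vain $\Omega_0^k$. On $\overline{\Omega_0^k}$ I would then construct a smooth vain function $u_0^k$ with $u_0^k \equiv a_k$ on $\partial \Omega_0^k$, $a_k \to \infty$, and $u_0^k \to u_0$ locally uniformly on $\Omega_0$; for instance as the maximum of a vain-preserving mollification of $u_0$ with a vain barrier that blows up at $\partial \Omega_0^k$. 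Vanity of the result is ensured by Proposition \ref{prop vain monotonical} and Corollary \ref{cor vain sum}.

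\textbf{Bounded solutions and the limit.} Standard parabolic theory gives, for each $k$, a smooth solution $u^k$ of graphical mean curvature flow on $\overline{\Omega_0^k} \times [0, T_k)$ with Dirichlet data $a_k$, and Proposition \ref{prop vain MCF} guarantees that $u^k(\cdot, t)$ is vain for every $t$. The Sáez--Schnürer convergence theory then produces (up to a subsequence) a continuous limit $u$ on a relatively open set $\Omega = \bigcup_{t \ge 0} \Omega_t \times \{t\}$, smooth for $t > 0$, solving graphical mean curvature flow with $u(\cdot, 0) = u_0$. Each $\Omega_t$ arises as the increasing union of the vain sets on which the $u^k(\cdot, t)$ remain bounded, hence is vain directly from the definition; $u(\cdot, t)$ is vain as a pointwise decreasing limit of vain functions on nested vain domains. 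Compactness of $\{(x,t) : u(x,t) \le a\}$ follows from a standard comparison argument: continuity of $u$ together with the blow-up $u(x,t)\to\infty$ as $x\to\partial\Omega_t$ gives closedness in $\R^n\times[0,\infty)$, while trapping the evolving graph below the evolution of $\min(u_0, a+1)$ and comparing its level sets with shrinking spheres confines the set spatially and forces it to become empty in finite time.

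\textbf{Main obstacle.} The delicate step is the initial approximation: simultaneously securing smoothness of $\Omega_0^k$ and $u_0^k$, constant Dirichlet data, vanity, and local uniform approximation of $(\Omega_0, u_0)$. The vain-preserving mollification calculation at the end of Section~2 is tailor-made for this, so I expect the construction to be technically involved but essentially routine. Once it is in place, the passage to the limit and the compactness of sublevel sets are standard consequences of the Sáez--Schnürer framework and the comparison principle.
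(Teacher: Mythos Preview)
Your overall strategy---build vain bounded approximations, invoke Proposition~\ref{prop vain MCF}, and pass to the limit through the S\'aez--Schn\"urer machinery---is exactly the paper's. The difference lies in the approximation step, and here the paper's implementation is considerably simpler than yours. Instead of smoothing the sublevel domains $\{u_0\le k\}$ into vain $\Omega_0^k$ and then manufacturing $u_0^k$ with prescribed boundary behaviour, the paper extends $u_0$ to all of $\R^n$ by $+\infty$, truncates via $\min\{\overline u_0,a\}$ (vain by Proposition~\ref{prop vain monotonical}, and constantly equal to $a$ outside the compact set $\{u_0\le a\}$), mollifies (still vain by the Section~2 calculation, still constant far out), and solves the Dirichlet problem on a large \emph{ball}---which is trivially a smooth bounded vain domain and on whose boundary the approximant already equals $a$. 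This dispenses entirely with the ``main obstacle'' you identify: no domain smoothing, no barriers. Your specific recipe also has a slip: taking the maximum of a mollified $u_0$ with a barrier that \emph{blows up} at $\partial\Omega_0^k$ cannot yield $u_0^k\equiv a_k$ on $\partial\Omega_0^k$; an additional truncation and re-smoothing would be needed, adding yet more layers. Once the approximation is set up, both arguments finish the same way: vanity of $\Omega_t$ from Proposition~\ref{prop vain level-sets} applied to the pointwise limit $\overline u(\cdot,t)$, and vanity of $u(\cdot,t)$ because pointwise limits of vain functions are vain. (Minor point: the limit is monotone \emph{increasing} in the approximation parameter, not decreasing.)
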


\begin{rem}
The assumption that $\{x:u_{0}(x)\le a\}$ be compact implies the
completeness of the graph of $u_{0}$ as well as a boundedness of
$u_{0}$ from below. The compactness of $\left\{ (x,t):u(x,t)\le a\right\} $
combines the completeness of the graphical hypersurfaces with a maximality
condition: One cannot stop the solution at some arbitrary time without
destroying that property.
\end{rem}

\begin{proof}
One uses the construction for the mean curvature flow without singularities
from \cite{SS} and checks whether the vanity is preserved in the
steps taken there.

Firstly, we extend $u_{0}$ to a function $\overline{u}_{0}\colon\R^{n}\to\overline{\R}$
by setting $\overline{u}_{0}(x)=+\infty$ for $x\notin\Omega_{0}$.
Then $\overline{u}_{0}$ is vain. Next, we cut off at some height
$a\in\R$ by concatenation with the monotonically increasing function
$\min\{\cdot,a\}$. By Proposition \ref{prop vain monotonical} $\min\{\overline{u}_{0},a\}$
is a vain function. Because $\{x:u_{0}(x)\le a\}$ is compact, the
resulting function $\min\{\overline{u}_{0},a\}$ is constantly equal
to $a$ outside a ball. A subsequent mollification as described in
the last section does not destroy the vanity nor this constancy property.

Having prepared the initial data in this way we can consider with
these the Dirichlet problem on a large ball for the graphical mean
curvature flow. This gives us an approximating solution. Proposition
\ref{prop vain MCF} ensures that the approximating solution is still
vain.

Of course we let $a\to\infty$, let the mollification parameter tend
to zero, and take larger and larger balls. The resulting sequence
of approximating solutions converges pointwise to a spacetime function
$\overline{u}\colon\R^{n}\times[0,\infty)\to\overline{\R}$. In fact,
the convergence is smooth on sets $\left\{ (x,t):\overline{u}(x,t)\le a\right\} $,
$a\in\R$ and $u\coloneqq\overline{u}|_{\Omega}$ with $\Omega\coloneqq\{(x,t):\left|\overline{u}(x,t)\right|<\infty\}$
is as asserted except for the vanity assertions (cf. \cite{SS}).

We have $\Omega=\overline{u}^{-1}((-\infty,\infty))=\overline{u}^{-1}([-\infty,\infty))$.
Therefore, the vanity of $\Omega$ and thus the vanity of the time
slices $\Omega_{t}$ follows from Proposition \ref{prop vain level-sets}.
Because the property of being vain is preserved under pointwise limits,
$\overline{u}(\cdot,t)$ is vain for any $t\ge0$. Hence, the same
holds for $u(\cdot,t)$.
\end{proof}

\section{Singularity resolving solutions\label{sec:Singularity-resolving-solutions}}
\begin{cor}
\label{cor vain weak solution} Let $\Omega_{0}\subset\R^{n}$ be
open and vain. Then there is a weak solution $(\Omega_{t})_{t\in[0,\infty)}$
of the mean curvature flow in the sense of a singularity resolving
solution that starts with $\Omega_{0}$ and such that $\Omega_{t}$
is vain for all $t\ge0$.
\end{cor}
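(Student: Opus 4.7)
The plan is to reduce the statement to Proposition \ref{prop vain MCFwS} by exhibiting a vain initial function $u_{0}\colon\Omega_{0}\to\R$ whose sublevel sets are compact subsets of $\Omega_{0}$. Once such a $u_{0}$ is in hand, Proposition \ref{prop vain MCFwS} produces an open set $\Omega=\bigcup_{t\ge 0}\Omega_{t}\times\{t\}$ with every time slice $\Omega_{t}$ vain, together with a smooth vain graphical MCF solution above it. By the Sáez--Schnürer construction (cf.\ \cite{SS}), the family of domains $(\Omega_{t})_{t\ge 0}$ is by definition a singularity resolving solution of MCF starting from $\Omega_{0}$, which is exactly the assertion.

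For the construction of $u_{0}$, I would take (assuming $\partial\Omega_{0}\neq\emptyset$; the case $\Omega_{0}=\R^{n}$ is handled by the second summand alone)
\[
u_{0}(x)\coloneqq\frac{1}{d(x)}+|x|^{2},\qquad d\coloneqq\mathrm{dist}_{\partial\Omega_{0}}\,.
\]
Vanity is then assembled from the toolbox of Section 2: Lemma \ref{lem vain dist} gives the vanity of $-d$, from which Proposition \ref{prop vain monotonical} (applied with the monotonically increasing map $z\mapsto -1/z$ on $(-\infty,0)$) yields vanity of $1/d$. The function $|x|^{2}$ is vain because $|x_{\lambda}|^{2}-|x|^{2}=(\lambda^{2}-1)(x^{1})^{2}\le 0$ for $\lambda\in[-1,1]$, and Corollary \ref{cor vain sum} combines the two summands. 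Local Lipschitz continuity follows from $d$ being $1$-Lipschitz and strictly positive on $\Omega_{0}$, and $|x|^{2}$ being smooth. Finally, any sublevel set satisfies $\{u_{0}\le a\}\subset\{|x|\le\sqrt{a}\}\cap\{d\ge 1/a\}$, so it is closed, bounded and at positive distance from $\partial\Omega_{0}$, hence a compact subset of $\Omega_{0}$.

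There is no substantive obstacle once the preceding sections are available: all the analytic work is hidden in Proposition \ref{prop vain MCFwS} and in the definition of the singularity resolving solution in \cite{SS}. The only genuine design choice is in the construction of $u_{0}$, where one has to balance two competing requirements, namely finiteness everywhere on $\Omega_{0}$ (so that $\Omega_{0}$ is realised as the initial time slice) and compact sublevel sets (so that Proposition \ref{prop vain MCFwS} applies). The combination of a boundary-blow-up term $1/d$ with a radial growth term $|x|^{2}$ achieves this balance, and the fact that both ingredients are individually vain is what makes the reduction clean.
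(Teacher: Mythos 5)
Your proof is correct and follows exactly the paper's route: the same choice $u_{0}(x)=1/d(x)+|x|^{2}$, the same appeal to Lemma \ref{lem vain dist}, Proposition \ref{prop vain monotonical} and Corollary \ref{cor vain sum} for vanity, and then Proposition \ref{prop vain MCFwS} to produce the singularity resolving solution. You actually spell out a few details the paper leaves implicit (the direct check that $|x|^{2}$ is vain, the compactness of sublevel sets, and the degenerate case $\partial\Omega_{0}=\emptyset$).
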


\begin{rem}
Singularity resolving solutions are introduced in \cite{SS}. A singularity
resolving solution is the projection, or likewise the domains of definition,
of a mean curvature flow of complete graphical hypersurfaces. It has
been proven in \cite{SS} that in the case of non-fattening of the
level-set flow starting from the boundary $\partial\Omega_{0}$, $\Omega_{t}$
coincides $\mathcal{H}^{n}$-almost everywhere with the corresponding
set of the level-set flow. In \cite{Shadows}, the author has shown
that $\left(\Omega_{t}\right)_{t\ge0}$ is always a weak solution
in the sense that any smooth mean curvature flow starting inside of
$\Omega_{t_{0}}$ will stay inside of $\Omega_{t}$ for $t\ge t_{0}$
and analogous for ``outside''.
\end{rem}

\begin{proof}
Without loss of generality we assume $\Omega_{0}\neq\emptyset$. Let
$d\coloneqq\mathrm{dist}_{\partial\Omega_{0}}$ be the positive distance
function to the boundary on $\Omega_{0}$. We set $u_{0}(x)=\frac{1}{d(x)}+|x|^{2}$
for $x\in\Omega_{0}$. By Lemma \ref{lem vain dist} and Propositions
\ref{prop vain monotonical} and \ref{cor vain sum}, $u_{0}$ is
a vain function and $u_{0}$ satisfies the hypothesis of Proposition
\ref{prop vain MCFwS}. Let $u\colon\Omega\to\R$ be a solution from
this Proposition (uniqueness of the solution is not proven). Then
the $\Omega_{t}$ are vain and they form a singularity resolving solution
by definition.
\end{proof}
\begin{thm}
\label{thm vain N} Let $\Omega_{0}\subset\R^{n}$ be an open and
vain set. Suppose $\Omega_{0}$ is bounded in the $x^{1}$-direction,
$N_{0}\coloneqq\partial\Omega_{0}\cap\{x^{1}>0\}$ is of class $C^{2}$,
and suppose that the curvature of $N_{0}$ is bounded on sets $\{x:x^{1}>\varepsilon\}$
and that $\nu_{1}\ge c>0$ is positively bounded from below on these
sets. (The bounds may depend on $\varepsilon$. $\nu$ is the normal
to $N_{0}$.)

Then, for the weak solution $(\Omega_{t})_{t\in[0,\infty)}$ from
Corollary \ref{cor vain weak solution}, the $N_{t}\coloneqq\partial\Omega_{t}\cap\{x:x^{1}>0\}$
are smooth submanifolds for $t>0$ and they solve the mean curvature
flow in the classical sense.
\end{thm}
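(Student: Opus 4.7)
The plan is to re-express the singularity resolving solution in terms of the smooth graphical mean curvature flow $u\colon\Omega\to\R$ provided by Proposition \ref{prop vain MCFwS} (applied to $u_{0}(x)=1/d(x)+|x|^{2}$), parametrize the upper half of the graph $\Sigma_{t}=\graph u(\cdot,t)\subset\R^{n+1}$ sideways in the $e_{1}$-direction, and identify $N_{t}$ as the smooth asymptotic limit of these sideways graphs as the height $x^{n+1}\to+\infty$.

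I would first upgrade vanity of $u(\cdot,t)$ to strict monotonicity $\partial_{1}u>0$ on $\{x^{1}>0\}\cap\Omega_{t}$ for every $t>0$. The quantity $w=\nu_{1}=\partial_{1}u/\vu$ satisfies $\partial_{t}w-\Delta w=|A|^{2}w$ on $\Sigma_{t}$ and is non-negative on the upper half by vanity (cf.\ the proof of Proposition \ref{prop vain MCF}); the boundedness of $\Omega_{t}$ in the $x^{1}$-direction (inherited from $\Omega_{0}$ via avoidance against large spheres) excludes $w\equiv 0$, and the strong maximum principle yields $w>0$. By Corollary \ref{prop vain implies graph} one writes $\Omega_{t}=\{(x^{1},\hat{x})\colon\hat{x}\in U_{t},\,|x^{1}|<h(\hat{x},t)\}$, and since $u(\cdot,\hat{x},t)$ is a smooth increasing bijection $(0,h(\hat{x},t))\to(u(0,\hat{x},t),+\infty)$, inversion yields a smooth sideways graph $x^{1}=f(\hat{x},s,t)$, defined for $t>0$, $\hat{x}\in U_{t}$ and $s>u(0,\hat{x},t)$. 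Because mean curvature flow is intrinsic to the hypersurface $\Sigma_{t}$, the reparametrization $f$ solves graphical MCF in the variables $(\hat{x},s)\in\R^{n}$; moreover $0<f\le\sup_{\Omega_{0}}|x^{1}|$, $f$ is increasing in $s$, and $f(\hat{x},s,t)\nearrow h(\hat{x},t)$ as $s\to\infty$.

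The heart of the proof is to upgrade this to smooth convergence. Fix $t_{0}>0$ and $\hat{x}_{0}$ with $h(\hat{x}_{0},t_{0})>0$; using the $C^{2}$-regularity and the lower gradient bound $\nu_{1}\ge c>0$ on $N_{0}$ together with continuity of the singularity resolving solution, one chooses $\rho,\delta>0$ small and $S_{0}$ large so that for every $s_{0}\ge S_{0}$ the translates $\tilde{f}_{s_{0}}(\hat{x},s',t):=f(\hat{x},s'+s_{0},t)$ are uniformly bounded solutions of graphical MCF on the fixed parabolic cylinder $B_{\rho}(\hat{x}_{0})\times(-2R,2R)\times[t_{0}-\delta,t_{0}+\delta]$. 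Ecker--Huisken-type interior estimates for graphical mean curvature flow (the interior gradient estimate followed by the higher-order Shi-type estimates) then yield $s_{0}$-independent $C^{\infty}$-bounds on $\tilde{f}_{s_{0}}$ on an interior sub-cylinder; the crucial point is that the only $s_{0}$-sensitive input is the $L^{\infty}$-norm of $\tilde{f}_{s_{0}}$, which is bounded uniformly by the $x^{1}$-extent of $\Omega_{0}$.

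Since $\tilde{f}_{s_{0}}\to h$ pointwise and monotonically as $s_{0}\to\infty$, Arzelà--Ascoli promotes this to $C^{\infty}_{\mathrm{loc}}$-convergence on the interior sub-cylinder. Passing to the limit in the graphical MCF equation for $\tilde{f}_{s_{0}}$ and noting that $\partial_{s'}\tilde{f}_{s_{0}}\to\partial_{s'}h=0$, one recovers the graphical MCF equation for $h$ in the $\hat{x}$-variables alone; equivalently, $N_{t}=\graph h(\cdot,t)$ is smooth and evolves by classical mean curvature flow. The main obstacle is securing uniformity of the interior estimates as $s_{0}\to\infty$; this is exactly what the $L^{\infty}$-bound on $f$ coming from the $x^{1}$-boundedness of $\Omega_{0}$ and the translation invariance of graphical MCF in the $s$-variable conspire to provide.
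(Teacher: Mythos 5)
Your proposal follows the same overall blueprint as the paper — pass to the sideways graph over $\{x^1=0\}$, translate to infinity in the $e_{n+1}$-direction, extract a smooth limit that represents $N_t$, and observe that in the limit the $s$-derivatives die so $h$ flows by $(n-1)$-dimensional graphical MCF. The two arguments differ, however, in where the uniform estimates come from, and that difference is exactly where your proposal has a gap.

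You claim that Ecker--Huisken interior estimates give $s_0$-independent $C^\infty$ bounds on the translates $\tilde f_{s_0}$, and that ``the only $s_0$-sensitive input is the $L^\infty$-norm of $\tilde f_{s_0}$.'' That is not what Ecker--Huisken provides. The interior gradient estimate in \cite{EH} (Section~3) bounds $v=\sqrt{1+|Df|^2}$ at time $t$ in terms of $\sup v$ \emph{at the initial time of the cylinder} multiplied by an exponential of the height oscillation; the curvature estimate of \cite{EH} Section~2 then requires that gradient bound as input. Applied to $\tilde f_{s_0}$ on your cylinder, the initial slice $\{t=t_0-\delta\}$ sits at $x^{n+1}\approx s_0\to\infty$, so the required initial gradient bound is precisely the quantity you are trying to control and your invocation is circular. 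What you would actually need is a Korevaar-type \emph{pure} interior gradient estimate for graphical MCF, depending only on the height oscillation and the waiting time, with no reference to any initial gradient. Such a result exists in the literature but is a distinct theorem, not the one in \cite{EH}, and your sketch neither proves it nor cites it; as written the argument does not close.

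The paper sidesteps this entirely: it works on the full hypersurfaces $M_t=\graph u(\cdot,t)$ in $\R^{n+1}$, constructs Ecker--Huisken-style shrinking-ball cut-offs $\varphi$ localizing near $\{X^1>\varepsilon\}$, and runs maximum-principle arguments on $w^{-1}\psi$ and on $|A|\varphi/(w-\underline w)$. These produce estimates (\ref{eq vain w}) and (\ref{eq vain |A|}) whose right-hand sides involve $\sup_{t=0}w^{-1}$ and $\sup_{t=0}|A|$ on $\{X^1>\varepsilon\}$ — this is exactly where the hypotheses $\nu_1\ge c>0$ and bounded curvature of $N_0$ enter. Your proposal mentions these hypotheses only to ``choose $\rho,\delta,S_0$'', which is a red herring: if your interior-estimate claim were correct as stated, those hypotheses would be entirely unused, which should already have been a warning sign. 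Either repair the argument by making it genuinely initial-data-dependent (propagating a bound on $w^{-1}$ from $t=0$ as the paper does), or supply the parabolic Korevaar estimate as a separate lemma; as it stands the crucial uniformity in $s_0$ is asserted, not proved.
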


\begin{proof}
Let $u\colon\Omega\to\R$ be the mean curvature flow without singularities
from the proof of Corollary \ref{cor vain weak solution}. Let $M_{t}\coloneqq\graph u(\cdot,t)$.
We will prove uniform estimates for $M_{t}$ in the region $X^{1}\ge\varepsilon$.
From these we infer that $\left(M_{t}\cap\{x:x^{1}>0\}\right)-j\,e_{n+1}\xrightarrow{j\to\infty}N_{t}\times\R$
converges locally smoothly with locally uniform estimates. In particular,
$N_{t}$ is smooth.

For the estimates one would like to use the cut-off function $(X^{1}-\varepsilon)_{+}$.
However, this function doesn't cut off compact subsets from the mean
curvature flow $M_{t}$. For this reason, one considers cut-off functions
similar to those in \cite{EH} and whose supports are given by shrinking
balls. One chooses larger and larger balls such that in the limit
the half-space $X^{1}>\varepsilon$ is obtained. More precisely, we
do the following construction. For $R_{0}>0$ and $\varepsilon>0$,
we set $X_{0}\coloneqq(R_{0}+\varepsilon,0,\ldots,0)\in\R^{n+1}$.
We define the corresponding cut-off function $\varphi\colon\R^{n+1}\times[0,\infty)\to\R$
by 
\[
\varphi(X,t)\coloneqq\frac{1}{2\,R_{0}}\left(R_{0}^{2}-2\,n\,t-|X-X_{0}|^{2}\right)_{+}\,.
\]
The support of $\varphi$ is given by a shrinking ball around $X_{0}$
of initial radius $R_{0}$. If $(X,t)$ is fixed and $R_{0}\to\infty$,
in the limit one obtains the cut-off function 
\[
\begin{split}\lim_{R_{0}\to\infty}\varphi(X,t) & =\lim_{R_{0}\to\infty}\frac{1}{2\,R_{0}}\left(-2\,n\,t-\sum_{\alpha\ge2}(X^{\alpha})^{2}+R_{0}^{2}-(X^{1}-(R_{0}+\varepsilon))^{2}\right)_{+}\\
 & =\lim_{R_{0}\to\infty}\left(\frac{1}{2\,R_{0}}\big(R_{0}-(X^{1}-(R_{0}+\varepsilon))\big)\big(R_{0}+(X^{1}-(R_{0}+\varepsilon))\big)\right)_{+}\\
 & =\lim_{R_{0}\to\infty}\left(\frac{1}{2\,R_{0}}\,(2\,R_{0}-X^{1}+\varepsilon)\,(X^{1}-\varepsilon)\right)_{+}\\
 & =(X^{1}-\varepsilon)_{+}\;.
\end{split}
\]

Considering the operator $\ddt-\Delta$ on the surface $M_{t}$ and
where $\varphi>0$, the function $\varphi$ satisfies, with a local
parametrization $p\mapsto X(p,t)$ of $M_{t}$ such that $\partial_{t}X$
points in normal direction, 
\global\long\def\BHO{\left(\ddt-\Delta\right)}%
 
\begin{equation}
\begin{split}\BHO\varphi\big(X(p,t),t\big) & =\frac{1}{2\,R_{0}}\left(-2\,n-\BHO|X(p,t)|^{2}\right)\\
 & =\frac{1}{2\,R_{0}}\left(-2\,n-2\left\langle \BHO X,X\right\rangle +2\,|\nabla X|^{2}\right)\\
 & =\frac{1}{2\,R_{0}}\,(-2\,n+2\,n)=0\;.
\end{split}
\label{eq vain eq phi}
\end{equation}
Here we have used $\BHO X=0$ and $|\nabla X|^{2}=g^{ij}\,\delta_{\alpha\beta}\,\nabla_{i}X^{\alpha}\,\nabla_{j}X^{\beta}=g^{ij}\,g_{ij}=n$.

In a region where $\varphi>\delta\,t$ holds ($\delta>0$), we have
for $\psi\coloneqq\varphi-\delta\,t$ 
\begin{equation}
\BHO\log\psi=\frac{\BHO\psi}{\psi}+\left|\frac{\nabla\psi}{\psi}\right|^{2}=-\frac{\delta}{\psi}+\left|\frac{\nabla\psi}{\psi}\right|^{2}\;.\label{eq vain eq psi}
\end{equation}

At first, we estimate $w=\nu_{1}$, where $\nu$ is the downwards
pointing normal to $M_{t}$. By the vanity of $u$, we have $w\ge0$
in the region $X^{1}\ge0$. The strong maximum principle implies that
$w>0$ in the region $X^{1}>0$ (note (\ref{eq vain eq w})).

In an interior maximum point of $w^{-1}\,\psi$, and consequently
of $-\log w+\log\psi$, there hold $\frac{\nabla\psi}{\psi}=\frac{\nabla w}{w}$
and 
\[
\begin{split}0 & \le\BHO(-\log w+\log\psi)\\
 & =-|A|^{2}-\left|\frac{\nabla w}{w}\right|^{2}-\frac{\delta}{\psi}+\left|\frac{\nabla\psi}{\psi}\right|^{2}\qquad\text{(note \eqref{eq vain eq w} and \eqref{eq vain eq psi})}\\
 & =-|A|^{2}-\frac{\delta}{\psi}<0\;.
\end{split}
\]
Contradiction. So there cannot be an interior maximum point, and because
$\psi$ vanishes on the lateral boundary of the region $\{(X,t):\varphi(X,t)>\delta\,t\}$,
$w^{-1}\,\psi$ is bounded by the supremum of its initial values.
With $\delta\to0$ it follows that $w^{-1}\,\varphi$ is bounded by
its initial values, too. Finally, we let $R_{0}\to\infty$ and obtain
the estimate 
\begin{equation}
w^{-1}\,(X^{1}-\varepsilon)_{+}\le\sup_{t=0}w^{-1}\,(X^{1}-\varepsilon)_{+}\;.\label{eq vain w}
\end{equation}

\global\long\def\uw{\underline{w}}%
 For the curvature estimate we consider the test function $f\coloneqq\frac{|A|\,\varphi}{w-\uw}$
with $\uw=\frac{1}{2}\inf w$, where the infimum is taken over the
set $\mathrm{supp}\,\varphi$. Using (\ref{eq vain w}) with $\tilde{\varepsilon}=\frac{\varepsilon}{2}$,
we see that $\uw\ge c>0$.

We shall make use of (\ref{eq vain eq w}), (\ref{eq vain eq phi}),
\begin{align*}
\left(\frac{\mathrm{d}}{\mathrm{d}t}-\Delta\right)|A|^{2} & =-2\,|\nabla A|^{2}+2\,|A|^{4},\\
\text{and}\quad\left|\nabla|A|^{2}\right| & =\left|2\left\langle A,\nabla A\right\rangle \right|\le2\,|A|\,|\nabla A|\,.
\end{align*}
In an interior maximum point of $f$, there holds
\[
\begin{split}0 & \le\BHO\log f^{2}=\BHO\left(\log|A|^{2}-2\log(w-\uw)+2\log\varphi\right)\\
 & =\underbrace{-2\,\frac{|\nabla A|^{2}}{|A|^{2}}}_{\le-\frac{1}{2}\left|\frac{\nabla|A|^{2}}{|A|^{2}}\right|^{2}}+2\,\frac{|A|^{4}}{|A|^{2}}+\left|\frac{\nabla|A|^{2}}{|A|^{2}}\right|^{2}-2\,\frac{w}{w-\uw}\,|A|^{2}-2\left|\frac{\nabla w}{w-\uw}\right|^{2}+2\left|\frac{\nabla\varphi}{\varphi}\right|^{2}\\
 & \le\frac{1}{2}\left|\frac{\nabla|A|^{2}}{|A|^{2}}\right|^{2}-\frac{2\,\uw}{w-\uw}\,|A|^{2}-2\left|\frac{\nabla w}{w-\uw}\right|^{2}+2\left|\frac{\nabla\varphi}{\varphi}\right|^{2}\,.
\end{split}
\]
 For arbitrary $\delta>0$ we deduce from the condition $\nabla\log f^{2}=0$
at the maximal point 
\[
\left|\frac{\nabla|A|^{2}}{|A|^{2}}\right|^{2}=\left|2\,\frac{\nabla w}{w-\uw}-2\,\frac{\nabla\varphi}{\varphi}\right|^{2}\le4\,(1+\delta)\left|\frac{\nabla w}{w-\uw}\right|^{2}+4\,(1+\delta^{-1})\left|\frac{\nabla\varphi}{\varphi}\right|^{2}\,.
\]
Therefore, 
\begin{equation}
0\le-\frac{2\,\uw}{w-\uw}\,|A|^{2}+2\,\delta\left|\frac{\nabla w}{w-\uw}\right|^{2}+2\,(2+\delta^{-1})\left|\frac{\nabla\varphi}{\varphi}\right|^{2}\,.\label{eq vain after extremal cond}
\end{equation}
There hold 
\begin{equation}
|\nabla\varphi|^{2}=\left|\left\langle \frac{X-X_{0}}{R_{0}},\nabla X\right\rangle \right|^{2}\le1\cdot|\nabla X|^{2}=g^{ij}\,\delta_{\alpha\beta}\,\nabla_{i}X^{\alpha}\,\nabla_{j}X^{\beta}=g^{ij}\,g_{ij}=n
\end{equation}
and 
\begin{equation}
|\nabla w|^{2}=|\nabla\nu^{1}|^{2}=g^{ij}\,(h_{i}^{k}\,\nabla_{k}X^{1})\,(h_{j}^{l}\,\nabla_{l}X^{1})\le|A|^{2}\,|\nabla X^{1}|^{2}\le|A|^{2}\,.
\end{equation}
Substituting these two inequalities into (\ref{eq vain after extremal cond})
yields 
\begin{equation}
0\le\left(\frac{-2\,\uw}{w-\uw}+\frac{2\,\delta}{(w-\uw)^{2}}\right)|A|^{2}+2\,(2+\delta^{-1})\,\frac{n}{\varphi^{2}}\;.
\end{equation}
With the choice $\delta=\frac{1}{2}\uw^{2}$, we obtain 
\begin{equation}
\left(\frac{-2\,\uw}{w-\uw}+\frac{2\,\delta}{(w-\uw)^{2}}\right)=\frac{-2\,\uw\,(w-\uw)+\uw^{2}}{(w-\uw)^{2}}\le\frac{-2\,\uw^{2}+\uw^{2}}{(w-\uw)^{2}}=-\frac{\uw^{2}}{(w-\uw)^{2}}\;.
\end{equation}
We conclude 
\begin{equation}
\frac{|A|^{2}\,\varphi^{2}}{(w-\uw)^{2}}\le2\,(2+2\,\uw^{-2})\,n\,\uw^{-2}\equiv C(\uw,n)\;.
\end{equation}
So, in an interior maximum point, $f$ is bounded by a controlled
constant. In particular, it follows that $|A|\,\varphi\le C(\uw,n)\,\left(1+\sup_{t=0}\left(|A|\,\varphi\right)\right).$
With $R_{0}\to\infty$ we obtain the estimate 
\begin{equation}
|A|\,(X^{1}-\varepsilon)_{+}\le C\,\left(\sup_{t=0}\left(|A|\,(X^{1}-\varepsilon)_{+}\right),\,\sup_{X^{1}>\varepsilon}w^{-1},\,n\right).
\end{equation}
Together with the estimate (\ref{eq vain w}) on $w$, we obtain the
curvature estimate 
\begin{equation}
\sup_{X^{1}>2\varepsilon}|A|\le C\left(\varepsilon,\,\sup_{t=0,\,X^{1}>\varepsilon}|A|,\,\sup_{t=0,\,X^{1}>\frac{1}{2}\varepsilon}w^{-1},\,\sup_{t=0}X^{1},\,n\right).\label{eq vain |A|}
\end{equation}

The higher order estimates, i.e., estimates on $|\nabla^{k}A|$, are
omitted because these can be obtained by following \cite{EH} from
hereon.

We have proven uniform estimates for $M_{t}=\graph u(\cdot,t)$ for
all $t$ in regions $\{X:X^{1}\ge\varepsilon\}$. By Proposition \ref{prop vain implies graph},
$M_{t}\cap\{X^{1}>0\}$ is graphical over the hyperplane orthogonal
to $e_{1}$. Let $h$ be the representing function. Then $h$ is bounded
and $h$ is monotonically increasing in the $e_{n+1}$ direction.
The inequality (\ref{eq vain w}) yields a gradient estimate on $h$
where $h>\varepsilon$. The estimates on the second fundamental form
(\ref{eq vain |A|}) and on its higher derivatives provide us with
estimates on the higher derivatives of $h$. We conclude that there
is a smooth limit of $h(\cdot,x^{n+1})$ for $x^{n+1}\to\infty$.
This limit is a graphical representation of $N_{t}$. Hence, $N_{t}$
is smooth.
\end{proof}
\begin{rem}
\label{rem vain freebdry} As a byproduct, Theorem \ref{thm vain N}
(see also Proposition \ref{prop vain implies graph}) provides a solution
of a free boundary value problem where the boundary moves on a hyperplane
and the hypersurface meets that hyperplane perpendicularly. Namely,
the family $(N_{t})_{t\in[0,\infty)}$ is a family of smooth graphical
hypersurfaces over a hyperplane that solves the mean curvature flow
($N_{t}$ may be empty). The boundaries $\partial N_{t}$, which reside
on the hyperplane, may be singular, however. But at spacetime-points
(on the hyperplane) where $\partial\Omega_{t}$ is smooth $\partial N_{t}$
is smooth too and by symmetry of $\partial\Omega_{t}$ the normal
to $\partial\Omega_{t}$ lies in the hyperplane such that $N_{t}=\partial\Omega_{t}\cap\{x:x^{1}>0\}$
meets the hyperplane perpendicularly. In this way $N_{t}$ can be
viewed as a smooth graphical solution to the free Neumann boundary
value problem with singularities at the boundary.

As was pointed out to the author by O.~Schnürer, it is not clear
that $N_{t}$ always meets the hyperplane perpendicularly. To explain
this, one needs to think about singularities of $\partial\Omega_{t}$
where it is possible to continuously extend the normal coming from
one of the two sides but where the limits from the two sides disagree.
In this case it may be possible that the normal for $N_{t}$ is definable
on the hyperplane but that it points out of the hyperplane.

\begin{figure}
\includegraphics[width=0.5\textwidth]{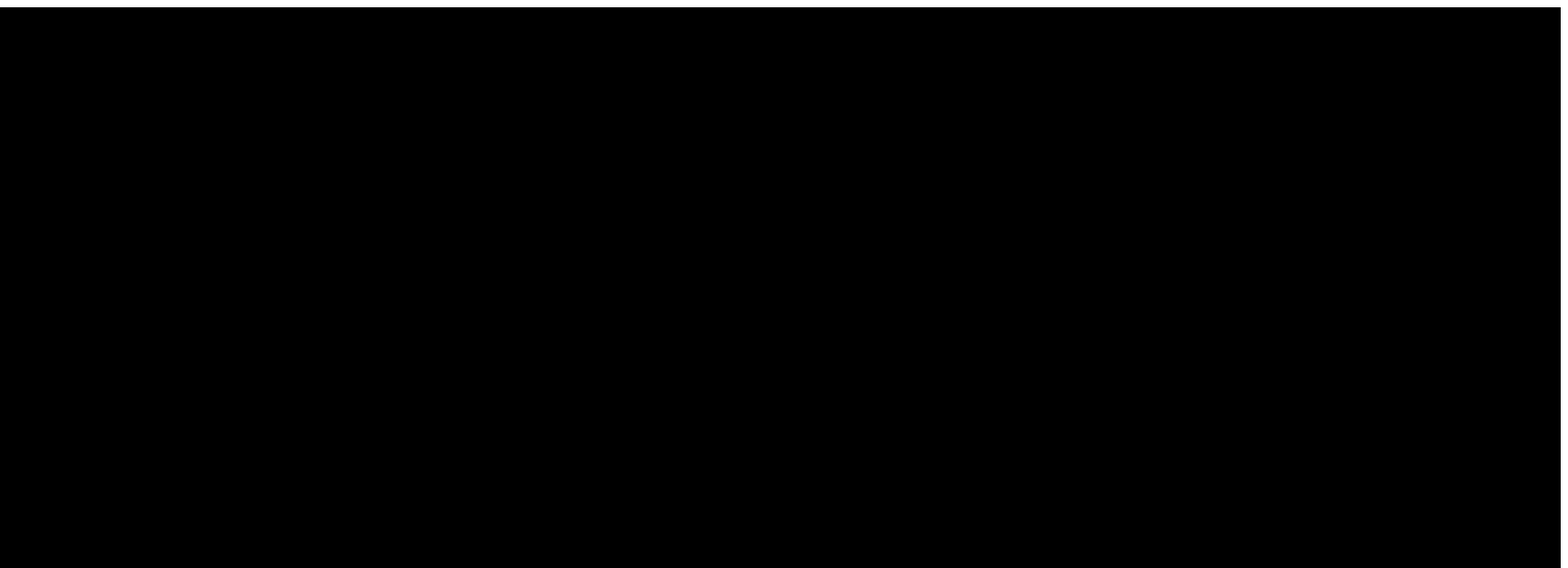}\hfill{}\includegraphics[width=0.5\textwidth]{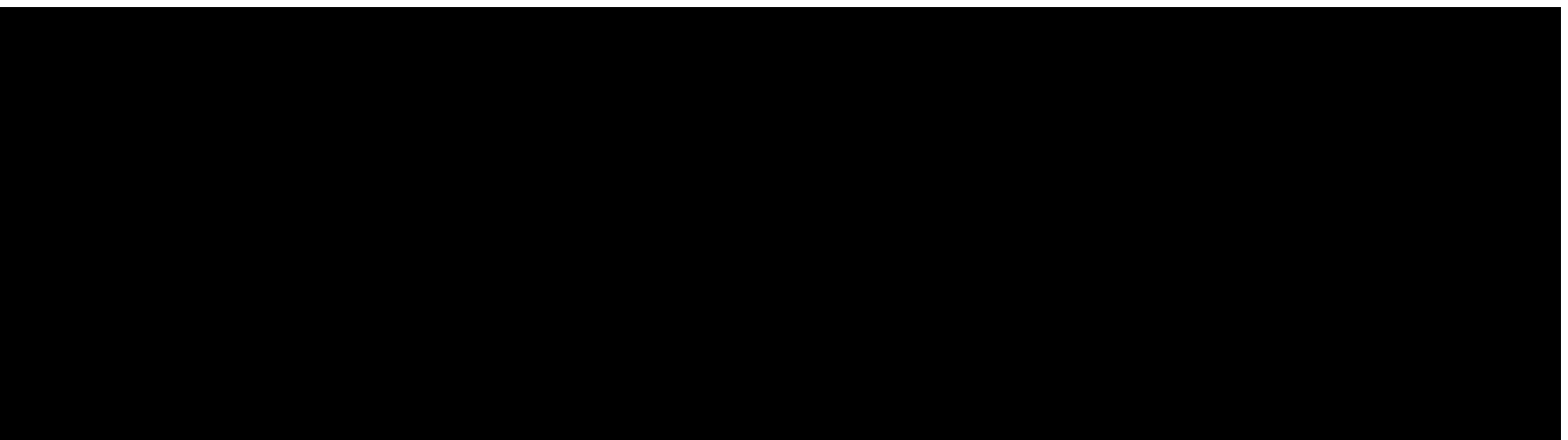}

\caption{Free boundary value problem}
The graphical surface moves by mean curvature flow subject to the
condition of meeting the plane perpendicularly. The first picture
shows the situation before, the second after the formation of a singularity.
Despite the singularity at the boundary the surface stays the graph
of a function which is smooth in the interior for all time.
\end{figure}
\end{rem}

\lyxaddress{\begin{center}
Wolfgang A.\ Maurer, Fachbereich Mathematik und Statistik, Universität
Konstanz, 78457 Konstanz, Germany\\
e-mail: wolfgang.maurer@uni-konstanz.de
\par\end{center}}
\end{document}